\theoremstyle{plain}
\newtheorem{theorem}{Theorem}[section]
\newtheorem{lemma}[theorem]{Lemma}
\theoremstyle{definition}
\theoremstyle{remark}
\begin{document}

\title[Billiard characterization of spheres]
      {Billiard characterization of spheres}

\date{July 2018}
\author{Misha Bialy }
\address{M. Bialy, School of Mathematical Sciences, Tel Aviv
University, Israel} \email{bialy@post.tau.ac.il}

\thanks{Partially supported in part by the Israel Science Foundation grant
162/15}

\subjclass[2010]{} \keywords{{Birkhoff billiards, Geodesics, Bodies of constant width }}

\begin{abstract} In this note we  study the higher dimensional convex billiards satisfying the so-called Gutkin property. A convex hypersurface $S$ satisfies this property if any chord $[p,q]$ which forms angle $\delta$ with the tangent hyperplane at $p$ has the same angle $\delta$ with the tangent hyperplane at $q$. Our main result is that the only convex hypersurface with this property in $\mathbf{R}^d, d\geq 3$ is a round sphere. 
	This extends previous results on Gutkin billiards obtained in \cite{B0}.
\end{abstract}

\maketitle



\section{\bf Introduction and main result}
Consider a convex compact domain in Euclidean space $\mathbf{R}^{d}$
bounded by a smooth hypersurface $S$ with positive principal curvatures everywhere.
We shall call $S$ a Gutkin billiard table if
there exists $\delta\in (0;\pi/2)$ such that for any pair of points $p,q \in S $  the following condition is satisfied: if the angle between the vector $\overrightarrow {pq}$ with the tangent hyperplane to $S$ at $p$ equals $\delta$, then the angle between $\overrightarrow {pq}$
and the tangent hyperplane at $q$ also equals $\delta$. Note that the case $\delta=\pi/2$  is classical and corresponds to
bodies of constant width. So hereafter we will assume  $0<\delta<\pi/2$.

Planar billiard tables
with this property were found and explored by Eugene Gutkin
\cite{gu1},\cite{gu2} (see also \cite {T2}). He proved that planar domains with this
property different from round discs exist for
those values of $\delta$ which for some integer $n>3$ satisfy  the
equation
\begin {equation}\label{G}
\tan(n\delta)=n\tan \delta.
\end{equation}
Moreover, the shape of these domains is also very special.
It is an open conjecture by E.Gutkin that for any $\delta\in (0;\pi/2)$ no more than one integer $n>3$ can satisfy (\ref{G}).

It turns out that the property of equal angles becomes very rigid in higher dimensions. It is the aim of this note to prove the following.
\begin{theorem}\label{main}
The only Gutkin billiard tables in $\mathbf{R}^{d}, d\geq 3$ are round spheres.
\end{theorem}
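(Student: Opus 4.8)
The first step is to recast the Gutkin condition dynamically. I would show that $S$ is a Gutkin table with angle $\delta$ if and only if the Birkhoff billiard ball map $\Phi$ of $S$ preserves the hypersurface $N_\delta=\{(q,v):\ \text{the angle between } v \text{ and } T_qS \text{ equals } \delta\}$ of its phase space. Indeed, if a trajectory leaves $p_0$ at angle $\delta$ and hits $S$ at $p_1$, the incoming chord $\overrightarrow{p_0p_1}$ makes angle $\delta$ with $T_{p_0}S$, hence by the Gutkin property with $T_{p_1}S$ as well, and since reflection preserves the angle with the tangent hyperplane the whole orbit stays in $N_\delta$; the converse is immediate. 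The phase space is $(2d-2)$-dimensional, so $N_\delta$ has dimension $2d-3$. For $d=2$ this is just an invariant curve — exactly the situation in which Gutkin's non-circular planar examples live — whereas for $d\ge 3$ it is a genuine $(2d-3)$-dimensional invariant hypersurface, and the extra room is what forces the rigidity.

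Next I would localize. Fix $p\in S$: the inward unit vectors making angle $\delta$ with $T_pS$ form a $(d-2)$-sphere, and following each of them to the second intersection with $S$ produces a $(d-2)$-dimensional ``footprint'' $\Sigma_p\subset S$; by the reformulation every $q\in\Sigma_p$ receives its chord from $p$ at angle $\delta$. Computing the tangent space of $\Sigma_p$ at such a $q$ in two ways — once from the equation saying that the chord leaves $p$ at angle $\delta$ (which brings in the shape operator $A_p=dn$, with $n$ the Gauss map) and once from the equation saying that it arrives at $q$ at angle $\delta$ (legitimate precisely because of the Gutkin property) — I expect to obtain the key identity: for every chord $[p,q]$ with $q\in\Sigma_p$,
\[
 \abs{p-q}\,A_p w \;-\; \sin\delta\, w \quad\text{is parallel to}\quad P_{T_pS}\,n(q)\;-\;\sin\delta\, w,
\]
where $w\in T_pS$ is the tangential component of the unit chord direction (of length $\cos\delta$) and $P_{T_pS}$ is orthogonal projection; in particular $A_p w\in\mathrm{span}\{w,\,P_{T_pS}n(q)\}$. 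By the symmetry of the Gutkin relation the mirror identity holds at $q$ (with $p$ running over $\Sigma_q$), and applying $\Phi$ repeatedly chains these together along every angle-$\delta$ billiard orbit. Equivalently, one may differentiate the functional equation $2\sin\delta\,\abs{p-q}=h(n(p))+h(n(q))-\langle p,n(q)\rangle-\langle q,n(p)\rangle$, valid on Gutkin chords, where $h$ is the support function.

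The decisive step is then to conclude from this overdetermined system that $A_p$ is a scalar operator at every point, so that $S$ is totally umbilic and hence — by the classical characterization of umbilic hypersurfaces, together with compactness — a round sphere. Here $d\ge 3$ enters essentially: as $q$ runs over $\Sigma_p$, the tangential direction $w$ sweeps a full $(d-2)$-sphere in $T_pS$, which spans $T_pS$ because $d-1\ge 2$. If $A_p$ had two distinct principal curvatures one could choose $w$ with $A_pw\not\parallel w$; then the key identity pins $P_{T_pS}n(q)$ to a definite line, and the goal is to show this is incompatible with the mirror identity at $q$ and with the way $q=q(w)$ depends on the launching direction — forcing instead $P_{T_pS}n(q)\parallel w$ for all $w$ (i.e. $n(p)$, $n(q)$, $q-p$ coplanar) and hence $A_pw\parallel w$ for all $w$ in the spanning $(d-2)$-sphere, i.e. $A_p=\kappa(p)\,\mathrm{Id}$.

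I expect this last deduction to be the main obstacle. The difficulty is that each instance of the identity couples the two endpoints of a chord that can be far apart, so the argument cannot be purely local; it must propagate the constraints along angle-$\delta$ billiard orbits using the Gutkin symmetry, or exploit the support-function functional equation above and its derivatives. Minor technical points to dispatch first are that $\Sigma_p$ is a nondegenerate $(d-2)$-dimensional submanifold, and that the containment of $\Sigma_p$ in the locus where the chord arrives at angle $\delta$ may be upgraded to an equality of hypersurfaces on an open set, so that the differentiated identities genuinely hold there.
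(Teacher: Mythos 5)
Your opening reformulation (invariance of the hypersurface of angle-$\delta$ lines under the billiard map) matches the paper's starting point, and your target (totally umbilic, hence a round sphere) is the right one. But the proposal has a genuine gap exactly where you yourself flag it: the ``decisive step'' from the first-order identity along chords to $A_p=\kappa(p)\,\mathrm{Id}$ is not carried out, and there is good reason to believe it cannot be carried out at that level. Differentiating the two angle identities (equivalently your support-function equation) once along a variation essentially reproduces the known fact that the characteristic foliation of the invariant hypersurface is by geodesics of $S$ carrying tangent vectors of length $\cos\delta$; this first-order information is satisfied by every Gutkin table, including the nontrivial planar Gutkin curves, so by itself it cannot distinguish a sphere from a hypersurface whose $2$-dimensional sections behave like Gutkin curves. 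Your claim that for a non-umbilic $A_p$ the identity ``pins $P_{T_pS}n(q)$ to a definite line'' and is then ``incompatible'' with the mirror identity is stated as a goal, not proved, and the needed coplanarity of $n(p)$, $n(q)$, $q-p$ is precisely the hard content: in the paper it is only obtained for chords of geodesics issued in \emph{principal} directions, and only after a nontrivial ODE argument for the torsion-like function $\tau$ along geodesics (planarity of principal geodesics), which is second/third-order information, not a consequence of the chord identity at one point.

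The paper's actual route shows what is missing. It imports from the prior work the planarity of principal-direction geodesics and, for $d>3$, the fact that $S$ has constant width with all these planar geodesics passing through antipodal points. It then works inside these planar sections (Gutkin curves of constant width), proving quantitative lemmas ($\rho(a)+\rho(\bar a)=2R$, $l+\bar l=4R\sin\delta$, $kl>\sin\delta$, and a dichotomy around $kl=2\sin\delta$), and finally differentiates the two angle identities \emph{twice} along a second principal geodesic to obtain a quadratic equation for $k_2(a)$ whose coefficients depend only on $\gamma_1$; the sign/root analysis (using the antipodal dichotomy when the quadratic has two positive roots) forces $d-2$ principal curvatures to coincide, and permuting the distinguished direction finishes the argument for $d>3$ ($d=3$ being already known). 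So the gap in your proposal is not a technicality: the umbilicity must be extracted from second-order data plus global constant-width/antipodal structure, and the local, first-order ``overdetermined system'' you set up does not suffice.
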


Gutkin property of the hypersurface $S$ can be interpreted in terms of the billiard map. In these terms this property means the existence of an invariant hypersurface in the phase space of the billiard of very specific form (see Section 2). Another geometric situation leading to an invariant hypersurface in the phase space appears when there exists a convex caustic for the billiard. However the latter can exist only for ellipsoids, as was shown in \cite{b-g},\cite{ber}. It would be interesting to understand in more details the existence, geometric and dynamical properties of invariant hypersurfaces of billiard maps.

There are very few results on higher dimensional convex billiards.  In \cite{Sine} round spheres are characterized by the property that all the orbits of billiard are 2-planar. In \cite{B5} a variational study of orbits is proposed.  Periodic orbits of the higher dimensional billiards are studied in \cite{F-T}.

	Half of Theorem \ref{main} has been recently proven in \cite{B0}. In particular, it was shown there that the result holds true for $d=3$. 
	In this paper we complete the second half of the result. Thus in higher dimensions number theoretic properties of $\delta$ are irrelevant. 
	Our approach uses symplectic nature of the billiard ball map as well as geometry of convex bodies of constant width.
	
	In the Section 2 we recall the approach of \cite{B0} and summarize the needed material from \cite{B0}.
	In Section 3 we prove several lemmas, and Section 4 contains the proof of the main theorem.

\section {\bf Preliminaries and previous results} Proof of Theorem 1.1 requires symplectic and differential geometric properties of Gutkin billiards.
\subsection{Symplectic preliminaries} Consider Birkhoff billiard inside  hypersurface $S$. The phase space
$\Omega$ of the billiard consists of the set of oriented lines
intersecting $S$. The space of oriented lines in $\mathbf{R}^{d}$ is
isomorphic to $T^*\mathbf{S}^{d-1}$ and hence carries natural
symplectic structure. Birkhoff billiard map $\mathcal B$ acts on the space of
oriented lines and preserves this structure. Another way to describe
this symplectic structure is the following. Every oriented line $l$
intersecting $S$ at $p$ corresponds to a unit vector with foot point
$p$ on $S$. Orthogonal projection onto the tangent space $T_pS$ maps
hemisphere of  inward unit vectors with foot point $p$  onto unit
ball of the tangent space $ T_pS$ in 1-1 way. Thus the phase space of oriented
lines intersecting $S$ is isomorphic to unit (co-)ball bundle of
$S$. The canonical symplectic form of this bundle coincides with
that defined above. Here and below we identify co-vectors with
vectors by means of the scalar product induced from
$\mathbf{R}^{d}$.

 The hypersurface $S$ has Gutkin property with the angle
$\delta\in (0;\pi/2)$ if and only if the hypersurface  $\Sigma_\delta$ of the phase space $\Omega$ determined by the formula
$$
\Sigma_\delta = \{(p,v)\in \Omega: p\in S, v\in T_p S, |v|=\cos\delta\}
$$
is invariant under $\mathcal B$. As a corollary we get:
\begin{theorem}
	Let   $S\subset\mathbf{R}^{d}$ be a Gutkin billiard table.
	
	1. The billiard ball map $\mathcal B$ preserves characteristics of $\Sigma_\delta$. Moreover, $\mathcal B$
	 preserves the natural orientation of the characteristics.
	 
	 2. Characteristics of $\Sigma_\delta$
	 are geodesics of $S$ equipped with their tangent vectors of the constant length
	 $\cos\delta$. 
\end{theorem}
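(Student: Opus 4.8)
Both parts should follow from a single structural fact: $\Sigma_\delta$ is a regular level set of the kinetic-energy Hamiltonian on $T^\ast S$, and the billiard map is a symplectomorphism preserving it. I would establish part~2 first. Identifying $T^\ast S$ with $TS$ via the Euclidean metric (as in the preceding subsection), put $H(p,v)=\tfrac12|v|^2$ on $\Omega$; then $c=\tfrac12\cos^2\delta$ is a regular value, since $dH$ vanishes only along the zero section, and $\Sigma_\delta=H^{-1}(c)$. For a regular level set of a function the characteristic line field is spanned by the Hamiltonian vector field: as $T_x\Sigma_\delta=\ker d_xH$ and $\iota_{X_H}\omega=dH$, the form $\omega(X_H,\cdot)$ annihilates $T_x\Sigma_\delta$, so $X_H(x)$ spans the one-dimensional kernel of $\omega|_{T_x\Sigma_\delta}$, and it is nonzero on $\Sigma_\delta$. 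For this $H$ the Hamiltonian flow is the (co)geodesic flow of $S$: an integral curve $(\gamma(t),v(t))$ satisfies $\dot\gamma=v$, so $\gamma$ is a geodesic of the induced metric run at constant speed $|\dot\gamma|=|v|\equiv\cos\delta$. Hence the characteristics of $\Sigma_\delta$ are exactly the geodesics of $S$ carrying their velocity vectors of length $\cos\delta$ — this is part~2 — and $X_H$ is a distinguished nonvanishing section of the characteristic line field, providing the ``natural orientation''.

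For part~1, I would use that $\mathcal B$ preserves $\omega$ (recalled above) and that, by the characterization of the Gutkin property recalled above, $\Sigma_\delta$ is $\mathcal B$-invariant. A diffeomorphism preserving $\omega$ and $\Sigma_\delta$ preserves $\omega|_{T\Sigma_\delta}$, hence its kernel, so $\mathcal B$ permutes the characteristics. For the orientation I would argue as follows. First, $\mathcal B$ preserves the Liouville form $\omega^{\,d-1}$, hence the orientation of $\Omega$; and as a homeomorphism fixing $\Sigma_\delta$ setwise it permutes the connected components of $\Omega\setminus\Sigma_\delta$, of which for $d\ge3$ there are exactly two, $U_-=\{|v|<\cos\delta\}$ and $U_+=\{|v|>\cos\delta\}$ (connectedness of the annular fibre of $U_+$ is where $d\ge3$ enters). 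These open sets are not homeomorphic: $U_-$ deformation retracts onto the zero section $S$, a $(d-1)$-manifold, so $H^k(U_-)=0$ for $k>d-1$, whereas $U_+$ deformation retracts onto the unit tangent bundle $T^1S$, a closed orientable manifold of dimension $2d-3>d-1$, so $H^{2d-3}(U_+)\ne0$. Therefore $\mathcal B(U_\pm)=U_\pm$, and $\mathcal B$ preserves the coorientation of $\Sigma_\delta$ pointing into $U_+$.

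To conclude I would combine this coorientation with $\omega$. Since $\Sigma_\delta$ is a hypersurface, its characteristic line field is the symplectic orthogonal $\mathcal L=(T\Sigma_\delta)^{\perp_\omega}\subset T\Sigma_\delta$, and the pairing $(\eta,[w])\mapsto\omega(\eta,w)$ gives an isomorphism $\mathcal L\cong(T\Omega/T\Sigma_\delta)^\ast$. Thus a coorientation of $\Sigma_\delta$ together with $\omega$ orients $\mathcal L$; and because $U_+=\{H>c\}$ and $\iota_{X_H}\omega=dH$, this induced orientation is exactly the one given by $X_H$, i.e.\ the natural orientation from part~2. Since $\mathcal B$ preserves $\omega$ and the coorientation into $U_+$, it preserves this orientation of the characteristics, which completes part~1.

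The steps identifying the characteristic foliation of an energy level with the Hamiltonian flow, and the latter with the geodesic flow, are routine. The only point that needs genuine care is the orientation statement: one must rule out $\mathcal B$ interchanging the two sides of the invariant hypersurface (the homotopy-type comparison of $U_-$ and $U_+$), and then check, through the symplectic identification $\mathcal L\cong(T\Omega/T\Sigma_\delta)^\ast$, that the orientation so obtained is the $X_H$-orientation and not its opposite.
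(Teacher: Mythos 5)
Your argument is correct and follows essentially the route the paper has in mind: the paper states this theorem without proof, as a corollary of the recalled facts that $\mathcal B$ is a symplectomorphism of the space of oriented lines (identified with the unit ball bundle of $S$) and that the Gutkin property is equivalent to invariance of $\Sigma_\delta$, with details deferred to \cite{B0}. Your identification of the characteristic foliation of $\Sigma_\delta$ with the geodesic flow at speed $\cos\delta$, and your orientation argument (ruling out that $\mathcal B$ swaps the two components of $\Omega\setminus\Sigma_\delta$ by comparing their homotopy types for $d\geq 3$, then transporting the coorientation through $\omega$ to the $X_H$-orientation), correctly supply the details the paper omits.
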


\begin{figure}[h]
\centering
\includegraphics[width=0.6\linewidth]{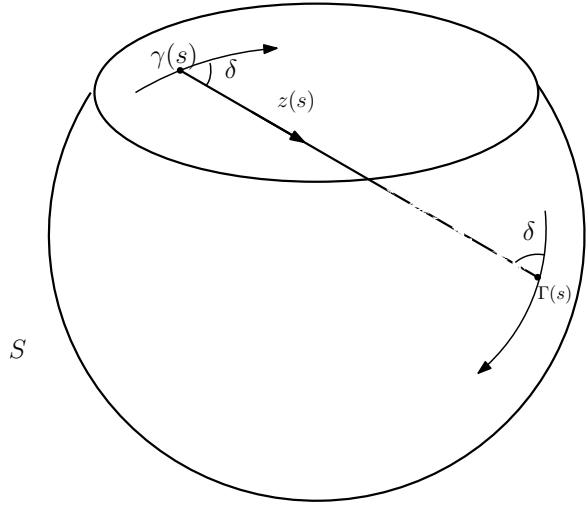}
\caption{Geodesic $\gamma$ is maped to $\Gamma$.}
\label{2}
\end{figure}

\subsection{Deviation from osculating 2-plane; planarity of geodesics}

Note, that since all principal curvatures of $S$ are assumed to be strictly positive, for any geodesic $\gamma$ on $S$
the curvature $k$ of $\gamma$ in $\mathbf{R}^{d}$ is strictly positive. Let us denote $v(s):=\dot{\gamma}(s)$, $n(s)$ the inner unit normal to $S$ at $\gamma(s)$. We can write first three Frenet formulae for a geodesic  $\gamma$ in  $\mathbf{R}^{d}$ as follows:
\begin{equation}\label{f1}
\dot{v}(s)=k(s)\ n(s).
\end{equation}

\begin{equation}\label{f2}
\dot{n}(s)=-k(s) v(s)+ \tau(s) w(s),
\end{equation}where $w$ is a unit vector in $\mathbf{R}^{d}$ orthogonal to $Span\{v,n\}$.
Also we have that $\dot w$ is orthogonal to $v$ and $w$ and we write :
\begin{equation}\label{f3}
\dot{w}(s)=-\tau(s)n(s)+\hat{w},
\end{equation}
where $\hat{w}$ is orthogonal to $Span\{v,n,w\}$.

If $d=3$, then $w$ is just a bi-normal vector of $\gamma$, $\hat{w}\equiv0$ and (\ref{f1}), (\ref{f2}),(\ref{f3}) are usual Frenet equations, where $\tau$ is torsion of $\gamma$.
It is important to note that also in higher dimensions one concludes from (\ref*{f2})  that the function $\tau$ vanishes if, and only if, the curve $\gamma$ lies in a 2-plane. Moreover, we have:

\begin{theorem}\label{tau}\
	
	1. Function $\tau$ satisfies linear differential equation:
	$$l\sin\delta (kl-\sin\delta)\dot{\tau}+B(s)\tau=0.$$
	
	2. The terms $(kl-\sin\delta)$ and $\tau$ do not vanish simultaneously.
	
3. If $\tau$ vanishes at one point it must vanish identically.
\end{theorem}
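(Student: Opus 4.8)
\medskip
\noindent\emph{Proof strategy.}
The plan is to encode the Gutkin property as differential relations along a geodesic by following the billiard chord, and then to produce the equation for $\tau$ by one more differentiation. Fix a geodesic $\gamma$ of $S$, parametrised by arclength, with Frenet data $v,n,w,k,\tau$ as in \eqref{f1}--\eqref{f3} (so $w$ is defined wherever $\tau\neq0$, and one argues there), set $e(s):=\cos\delta\,v(s)+\sin\delta\,n(s)$ for the inward unit direction of the chord leaving $\gamma(s)$, let $l(s)>0$ be the chord length, and put $p(s):=\gamma(s)+l(s)e(s)\in S$. By the theorem of Section~2 --- characteristics of $\Sigma_\delta$ are geodesics, and $\mathcal B$ carries characteristics to characteristics preserving orientation --- the points $p(s)$ sweep out a reparametrisation of the image geodesic $\Gamma=\mathcal B(\gamma)$. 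Writing $N(s)$ for the inward unit normal of $S$ at $p(s)$ and $V(s)$ for the unit tangent of $\Gamma$ there, the Gutkin condition is exactly $\langle e,N\rangle=-\sin\delta$, while the reflection law gives $V=(e+\sin\delta\,N)/\cos\delta$, hence $\langle e,V\rangle=\cos\delta$.

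Differentiating once, \eqref{f1}--\eqref{f2} give
\[
\dot p=(1+\dot l\cos\delta-lk\sin\delta)\,v+(\dot l\sin\delta+lk\cos\delta)\,n+(l\tau\sin\delta)\,w ,
\]
and because $p(s)$ runs along $\Gamma$ one has $\dot p=\mu V$ with $\mu>0$ (the reparametrisation is increasing, $\mathcal B$ preserving orientation of characteristics). Pairing with $e$ gives $\mu=1+\dot l/\cos\delta$, and then $|\dot p|^{2}=\mu^{2}$ gives the identity
\[
(kl-\sin\delta)^{2}+(l\tau\sin\delta)^{2}=(\mu\sin\delta)^{2}\,;
\]
moreover $\dot p=\mu V$ together with $V=(e+\sin\delta N)/\cos\delta$ forces $N\in\mathrm{Span}\{v,n,w\}$, with $w$-component $l\tau\cos\delta/\mu$ and $v,n$-components explicit in $l,\dot l,k,\delta$. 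Part~2 is now immediate: a common zero of $kl-\sin\delta$ and $\tau$ would make the left side of the displayed identity vanish, forcing $\mu\sin\delta=0$, which is impossible since $\mu>0$ and $0<\delta<\pi/2$; the case $\tau\equiv0$ on an interval is handled by the same computation with the $w$-term absent.

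Differentiating a second time, the fact that $\Gamma$ is a geodesic forces $\dot V$ to be a scalar multiple of $N$; computing $\dot V$ from the above and from \eqref{f1}--\eqref{f3}, its component orthogonal to $\mathrm{Span}\{v,n,w\}$ equals $(l\tau\sin\delta/\mu)\,\hat w$, which must vanish, so --- being on the locus $\tau\neq0$ --- $\hat w=0$ there and $\Pi:=\mathrm{Span}\{v,n,w\}$ is a fixed $3$-plane, which also contains $V$ and $N$. The residual, in-$\Pi$, content of the parallelism $\dot V\parallel N$ is then a single scalar equation; writing $\cos\phi:=(kl-\sin\delta)/(\mu\sin\delta)$ and $\sin\phi:=l\tau/\mu$ (allowed by the Pythagorean identity), it collapses to $l\dot\phi+\dot l\sin\phi=0$, and rewriting in terms of $\tau=\mu\sin\phi/l$ and multiplying through by $(kl-\sin\delta)/\mu$ puts it in the stated form $l\sin\delta(kl-\sin\delta)\dot\tau+B(s)\tau=0$. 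Part~3 then follows from parts~1 and~2: where $kl-\sin\delta\neq0$ this is a regular linear ODE, so a zero of $\tau$ spreads over the connected component of $\{kl-\sin\delta\neq0\}$ that contains it; a boundary point of that component, where $kl-\sin\delta=0$, would by continuity be a common zero of $\tau$ and $kl-\sin\delta$, contradicting part~2 --- hence the component is the entire geodesic and $\tau\equiv0$.

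The only genuinely laborious step is this second differentiation: one must carry through the elimination of the curvature and torsion of $\Gamma$ and of $\dot k$ and $\ddot l$, and verify that the coefficients reorganise so that precisely $l\sin\delta(kl-\sin\delta)$ multiplies $\dot\tau$. The identity $(kl-\sin\delta)^{2}+(l\tau\sin\delta)^{2}=(\mu\sin\delta)^{2}$, together with its $s$-derivative, is the algebraic mechanism that makes this cancellation work.
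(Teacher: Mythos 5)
The paper itself does not prove this theorem --- it is imported from \cite{B0} --- so I can only compare your argument with the framework the paper sets up (invariance of $\Sigma_\delta$, Theorem 2.1, and the Frenet-type relations (2)--(4)), and your route is exactly that intended one: follow the chord $p(s)=\gamma(s)+l(s)e(s)$, use that $\mathcal{B}$ maps the characteristic over $\gamma$ onto the characteristic over a geodesic $\Gamma$ preserving orientation, and differentiate twice. I checked the key computations and they are correct: the displayed formula for $\dot p$, $\mu=1+\dot l/\cos\delta$, and the identity $(kl-\sin\delta)^2+(l\tau\sin\delta)^2=\mu^2\sin^2\delta$ all hold, and the identity does give Part 2 once $\mu>0$ is secured (which it is: $\mathcal{B}$ is a diffeomorphism, so the reparametrisation of the image characteristic has nowhere-vanishing derivative, and orientation-preservation fixes the sign --- worth saying explicitly, since ``increasing'' alone would allow critical points). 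I also verified your collapsed form of the geodesic condition: writing the single scalar equation as $\det[V,N,\dot V]=0$ in the frame $(v,n,w)$ gives $\sin\delta\,\dot\phi-k\cos\delta\sin\phi+\tau\sin\delta\cos\delta(1+\cos\phi)=0$, and with $\sin\phi=l\tau/\mu$, $\cos\phi=(kl-\sin\delta)/(\mu\sin\delta)$, $\dot l=(\mu-1)\cos\delta$ this is identically $l\dot\phi+\dot l\sin\phi=0$; rewriting in $\tau$ gives $l\dot\tau+\tau\bigl[\dot l(1+\cos\phi)-l\dot\mu/\mu\bigr]=0$, and multiplication by $\sin\delta(kl-\sin\delta)$ produces the stated leading coefficient. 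So Parts 1 and 2 are sound.

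The one step that does not quite close as written is Part 3. Your derivation of the ODE is carried out on the set $\{\tau\neq0\}$ (you need $w$ and $\hat w$ there, and in $d\geq4$ even a smooth signed choice of $\tau$ across a zero is not automatic), so you cannot directly invoke uniqueness for a ``regular linear ODE'' in a neighbourhood of a point where $\tau$ vanishes: that is precisely where your equation has not been established, and your appeal to Part 2 only controls the boundary of $\{kl-\sin\delta\neq0\}$, not this issue. The gap is cheap to fill and should be stated: in the form $\dot\tau=c(s)\tau$ with $c=\dot\mu/\mu-\dot l(1+\cos\phi)/l$, the coefficient is expressed through $l,\dot l,\ddot l,k,\delta$ alone (via $\mu>0$ and $\cos\phi=(kl-\sin\delta)/(\mu\sin\delta)$), hence is continuous along the whole geodesic independently of the frame; if $\tau(s_0)=0$ but $\tau\not\equiv0$, take a maximal subinterval adjacent to a zero on which $\tau\neq0$, integrate the ODE there and let the endpoint tend to the zero --- Gronwall keeps $|\tau|$ bounded away from $0$, a contradiction. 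With that remark (or an equivalent frame-free derivation of the equation across zeros of $\tau$), your proposal is complete and is, as far as one can tell from this paper, the same argument as in \cite{B0}.
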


As a consequence of Theorem \ref{tau} we get planarity of some geodesic curves of $S$.
\begin{theorem}\label{planar}
	Every geodesic curve on $S$ which at some point $p$ passes in a principal direction lies necessarily in a 2-plane
	spanned by this direction and the normal line at $p$. Moreover, this geodesic curve
	has a principal direction at every point where it passes.
\end{theorem}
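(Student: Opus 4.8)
The plan is to combine the Frenet-type system (\ref{f1})--(\ref{f3}) with Theorem \ref{tau}: I will show that a geodesic $\gamma$ of $S$ passes in a principal direction at a point exactly when its function $\tau$ vanishes there, and then read off both assertions from part (3) of Theorem \ref{tau}.

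First I would reinterpret equation (\ref{f2}) through the Weingarten map. Along $\gamma$ the vector $n(s)$ is the inner unit normal of $S$, so $\dot n(s)$ is tangent to $S$ and equals $-W_{\gamma(s)}(v(s))$, where $W_p$ denotes the self-adjoint shape operator of $S$ at $p$ (positive definite, since all principal curvatures are positive). Comparing with (\ref{f2}) gives $W_{\gamma(s)}(v(s)) = k(s)\,v(s) - \tau(s)\,w(s)$, and since $\tau(s)\,w(s) = \dot n(s) + k(s)\,v(s)$ is tangent to $S$, this is an identity among tangent vectors. As $v(s)$ is a unit vector orthogonal to $w(s)$, the right-hand side is a scalar multiple of $v(s)$ if and only if $\tau(s)=0$. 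Hence $\gamma$ passes in a principal direction at $\gamma(s_0)$ if and only if $\tau(s_0)=0$ (when $\tau(s_0)=0$ the identity reduces to $W(v)=kv$, so $v(s_0)$ is automatically a principal direction, with normal curvature $k(s_0)$).

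Now suppose $\gamma$ passes in a principal direction at some point $p=\gamma(s_0)$; by the equivalence just established $\tau(s_0)=0$, so Theorem \ref{tau}(3) forces $\tau\equiv 0$. Applying the same equivalence at an arbitrary parameter then shows that $\gamma$ passes in a principal direction at every one of its points, which is the second assertion. For the first, with $\tau\equiv0$ the equations (\ref{f1}), (\ref{f2}) collapse to $\dot v = kn$ and $\dot n = -kv$; thus $\dot v,\dot n\in\mathrm{Span}\{v,n\}$, so the $2$-plane $\mathrm{Span}\{v(s),n(s)\}$ is a fixed linear subspace $\Pi$, equal to $\mathrm{Span}\{v(s_0),n(s_0)\}$. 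For any $u\perp\Pi$ we have $\langle\dot\gamma(s),u\rangle=\langle v(s),u\rangle=0$, so $\gamma(s)-p\in\Pi$ for all $s$, i.e.\ $\gamma$ lies in the affine $2$-plane through $p$ spanned by the principal direction $v(s_0)$ and the normal line at $p$. The only step requiring care is the bookkeeping of the previous paragraph --- verifying that $\tau w$ is tangent to $S$ and fixing the sign conventions so that a principal direction corresponds precisely to the vanishing of the $w$-component of $W(v)$; granting that, the proposition is an immediate corollary of Theorem \ref{tau}, with no further computation needed.
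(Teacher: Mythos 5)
Your proof is correct and follows essentially the same route the paper intends, since Theorem \ref{planar} is presented there precisely as a consequence of Theorem \ref{tau}: identifying ``passing in a principal direction'' with the vanishing of $\tau$ via (\ref{f2}) and the shape operator, invoking part 3 of Theorem \ref{tau} to get $\tau\equiv 0$, and then reading planarity off the collapsed Frenet system. No gaps; the sign and tangency bookkeeping you flag is handled correctly.
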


Using Theorem \ref{planar}, we get the following:

\begin{theorem}\label{width}Let $S$ be a convex hypersurface in $\mathbf{R}^{d}$ satisfying Gutkin property. Then:
	
	1. For $d=3$ it follows from  Theorem \ref{planar} that $S$ is a round sphere.
	
	2. For $d>3$,  hypersurface $S$ is of constant width. All geodesics passing in a principle direction are planer curves of the same constant width. Moreover, all geodesics passing through a point $p$ in  principle directions pass also through the antipodal point $\bar p$.
\end{theorem}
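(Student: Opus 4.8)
The plan is to prove Theorem \ref{width} by exploiting the strong conclusions of Theorem \ref{planar} together with classical facts about convex bodies. For part 1 ($d=3$): by Theorem \ref{planar}, through any point $p\in S$ and along either of the two principal directions at $p$, the geodesic emanating in that direction is a planar curve lying in the plane through the principal direction and the normal line, and it remains principal at every one of its points. Since $d=3$, the two principal directions at $p$ are orthogonal and span $T_pS$; thus every tangent line at $p$ through a principal direction generates a planar principal geodesic. I would argue that having a two-parameter family of planar geodesics all of whose points are umbilic-forcing (being simultaneously principal for both sheets) forces all points of $S$ to be umbilic, and a closed convex surface all of whose points are umbilic is a round sphere. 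More carefully, near a non-umbilic point the two principal foliations are smooth and orthogonal; the planar principal geodesics are then the integral curves of these foliations, and planarity of \emph{all} leaves of \emph{both} orthogonal foliations on a surface in $\mathbf{R}^3$ is very restrictive — I would show it implies the leaves are pieces of circles and, combining with Theorem \ref{tau}(3), that $kl-\sin\delta$ is forced to vanish, pinning down $k$ and hence the sphere.

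For part 2 ($d>3$), the first claim is constant width. Fix $p\in S$ and a principal direction $e$ at $p$; let $\gamma$ be the principal geodesic through $p$ in direction $e$, which by Theorem \ref{planar} lies in the 2-plane $\pi$ spanned by $e$ and $n(p)$, and has a principal direction at every point. I want to show $\gamma$ is a closed planar convex curve and that $S\cap\pi=\gamma$. Indeed, $S\cap\pi$ is a closed convex planar curve through $p$ tangent to $e$ there; a planar geodesic of $S$ contained in $\pi$ must coincide with this section curve (the geodesic stays in $\pi$, so it is an arc of $S\cap\pi$, and by completeness/compactness it exhausts it). The key point is then to identify the \emph{opposite} point $\bar p$ of $\gamma$ — the point where the tangent to $\gamma$ is again parallel to $e$ — and to show that the tangent plane $T_{\bar p}S$ is parallel to $T_pS$. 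This is where the Gutkin property re-enters: consider the chord $[p,\bar p]$; by planarity and the principal-direction structure, I would show it makes equal angles (and in fact is perpendicular) with $\gamma$ at both ends, hence with the principal directions, and then that it is perpendicular to \emph{all} of $T_pS$ and $T_{\bar p}S$. Since $p$ was arbitrary, every boundary point has a well-defined antipode with parallel tangent plane, which is exactly constant width; and the width (the length of $[p,\bar p]$) is the same for all $p$ because the planar curves $\gamma$ all satisfy the same Gutkin equation with the same $\delta$, so by the planar theory from \cite{B0},\cite{gu1} they are congruent, of the same width. The statements that each such $\gamma$ is a planar curve of that constant width, and that principal geodesics through $p$ pass through $\bar p$, then follow from the identification of $\bar p$ with the antipode.

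The main obstacle I expect is the step identifying the chord $[p,\bar p]$ as the width segment, i.e. showing $T_{\bar p}S \parallel T_pS$. The subtlety is that $\gamma$ lies in the 2-plane $\pi$, and it is easy to see the \emph{planar} normal of $\gamma$ at $\bar p$ is anti-parallel to that at $p$; but one must upgrade this to the full normal $n(\bar p)$ of $S$, using that $\gamma$ is principal at $\bar p$ so that the surface normal $n$ along $\gamma$ stays in $\pi$ (this is precisely the content of the Frenet relations (\ref{f1})–(\ref{f2}) with $\tau\equiv 0$ along a planar geodesic, forcing $n(s)\in\pi$), and then showing that the other principal curvatures at $p$ and $\bar p$ are matched — here I would feed the chord $[p,\bar p]$ into the Gutkin condition for \emph{nearby} principal directions, i.e. vary $p$ within $S$ and differentiate, to conclude the whole tangent hyperplane at $\bar p$ is parallel to that at $p$. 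The bookkeeping of angles in dimension $d>3$, where the "binormal" directions $w,\hat w,\dots$ are not canonically fixed, is the delicate part; everything else is assembling Theorems \ref{tau} and \ref{planar} with standard convex-geometry facts (a closed convex hypersurface with a parallel tangent plane at an antipode for every point has constant width, and in dimension $3$ an all-umbilic closed convex surface is a sphere).
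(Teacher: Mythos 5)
The paper itself does not prove Theorem \ref{width} here: it is quoted from \cite{B0}, so your attempt has to stand on its own, and its central step for $d>3$ has a genuine gap. You define $\bar p$ as the point of the planar principal geodesic $\gamma$ with tangent parallel to $e$ and then assert that the chord $[p,\bar p]$ is perpendicular to $\gamma$ at both ends; but this is exactly the constant-width property you are trying to prove, and no argument confined to the single plane of $\gamma$ can deliver it. Indeed, a planar Gutkin curve with a given $\delta$ need not have constant width at all, and your auxiliary claim that all planar Gutkin curves with the same $\delta$ are congruent (hence of equal width) is false: for $\delta$ solving $\tan(n\delta)=n\tan\delta$ Gutkin's curves form a family containing the round circle, and when the distinguished harmonic $n$ is even such curves are not of constant width. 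Your fallback (``vary $p$ and differentiate the Gutkin condition'') is a hope, not a proof. The missing idea is genuinely higher-dimensional: define $\bar p$ as the second intersection of the inward normal line at $p$ with $S$. Every principal $2$-plane $\pi_i=\mathrm{span}(v_i,n(p))$ contains that line, and $\gamma_i=S\cap\pi_i$, so \emph{all} principal geodesics through $p$ pass through $\bar p$ (the last claim of the theorem comes for free). By Theorem \ref{planar}, $\gamma_i$ is principal at $\bar p$ as well, and along a planar geodesic the surface normal lies in the plane of the curve, so $n(\bar p)\in\pi_i$ for every $i$; since $\pi_1\cap\pi_2=\mathbb{R}\,n(p)$ when $v_1\perp v_2$ (this is where $d\ge 3$ enters), we get $n(\bar p)=\pm n(p)$, i.e. $[p,\bar p]$ is a double normal. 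As $p$ was arbitrary, every normal of $S$ is a double normal, which characterizes constant width (cf. \cite{BF}); all double normals then have the same length $2R$, and applying the argument at each point of $\gamma_i$ shows every normal of the planar curve $\gamma_i$ is a double normal of length $2R$, so each principal geodesic is a planar curve of the \emph{same} constant width $2R$ --- this, not congruence of planar Gutkin curves, is the correct source of the uniform width.

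Your sketch of part 1 ($d=3$) is also off the mark. A point lying on two planar principal geodesics (one per family) is not thereby umbilic, and the conclusion you aim for --- that $kl-\sin\delta$ is ``forced to vanish'' --- contradicts Theorem \ref{tau}(2): on a planar geodesic $\tau\equiv 0$, so $kl-\sin\delta$ never vanishes there; nor would pinning down $k$ along individual geodesics yield a sphere. The intended deduction from Theorem \ref{planar} is different: near a non-umbilic point the geodesics in principal directions are lines of curvature, hence by uniqueness both curvature-line foliations consist of geodesics; in curvature-line coordinates this kills the relevant derivatives of the metric coefficients and forces the Gaussian curvature to vanish, contradicting $k_1k_2>0$. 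Hence there are no non-umbilic points, and a closed totally umbilic convex surface is a round sphere. The parts of your write-up that do work --- the identification of $\gamma$ with the planar section $S\cap\pi$, and the observation that along a planar geodesic the surface normal stays in the plane --- are correct and are exactly the ingredients the double-normal argument above needs.
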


\section{\bf  Some lemmas}
\begin{lemma}\label {l0}Let $\gamma$ be a convex curve of constant width in the plane satisfying Gutkin property. Let $a,\bar a$
	be a pair of antipodal points on $\gamma$. Let $b,c$ be the points on $\gamma$ such that the chords $[a,b]$ and $[\bar a,c]$ form angle $\delta$ with $\gamma$ at both ends (see Fig.\ref{F1}).
	Then $c$ is the antipodal point of $b$, i.e. $c=\bar b$.
\end{lemma}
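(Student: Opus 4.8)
The plan is to use the Gutkin property together with the constant-width hypothesis to pin down the relative positions of $b$ and $\bar{b}$. Recall that constant width means the two parallel supporting lines at antipodal points $a,\bar{a}$ are genuinely parallel, and the chord $[a,\bar{a}]$ is orthogonal to both of them (width is realized in the normal direction). First I would set up notation: let $w$ denote the constant width, let $\gamma$ be traversed with unit speed, and let $T_a, T_{\bar a}$ be the (anti-parallel) oriented tangent lines at $a$ and $\bar a$. Since $[a,b]$ makes angle $\delta$ with $\gamma$ at $a$ and at $b$, and $[\bar a, c]$ makes angle $\delta$ with $\gamma$ at $\bar a$ and at $c$, I would compare the chord $[a,b]$ with the chord $[\bar a, \bar b]$ directly, where $\bar b$ is the (known) antipode of $b$.

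The key step is the following symmetry observation: because $\gamma$ has constant width, the map sending each point to its antipode, $x \mapsto \bar x$, reverses tangent directions (the tangent line at $\bar x$ is anti-parallel to the tangent line at $x$), and it is a bijection of $\gamma$. I would argue that the chord $[\bar a, \bar b]$ makes angle $\delta$ with $\gamma$ at $\bar a$: indeed, the chord $[a,b]$ makes angle $\delta$ with $T_a$ at $a$, and I claim the directed chord $\overrightarrow{\bar a \bar b}$ makes the same angle with $T_{\bar a}$. To see this cleanly, I would use that for a constant-width curve the "width segment" structure forces $\overrightarrow{\bar a \bar b} = \overrightarrow{ba} + (\text{normal translations at the two ends})$, or more robustly, invoke the fact (essentially the $\delta = \pi/2$ case, or a direct computation from $|\bar x - \bar y|$ versus $|x-y|$) that the chord from $\bar a$ to $\bar b$ subtends angle $\delta$ at $\bar a$ because the chord from $a$ to $b$ does at $a$. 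By the Gutkin property applied at the chord $[\bar a, \bar b]$, it then also makes angle $\delta$ with $\gamma$ at $\bar b$. Thus both $c$ and $\bar b$ are points on $\gamma$ reached from $\bar a$ by a chord making angle $\delta$ at both ends, on the same side; by the uniqueness of such a Gutkin partner (for a fixed point, fixed $\delta$, and fixed orientation there are only finitely many, and the relevant one is determined once one also matches the second angle and the side), one concludes $c = \bar b$.

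The main obstacle I expect is making the claim "$[\bar a, \bar b]$ makes angle $\delta$ at $\bar a$" fully rigorous rather than merely plausible from a picture. The subtlety is that antipodality of $b$ and $\bar b$ is defined via the normal directions of $\gamma$, not via the chord to $a$, so one must genuinely use the geometry of constant-width curves: the support function $h(\theta)$ satisfies $h(\theta) + h(\theta+\pi) = w$, and the chord between the points with outer normals $\theta$ and $\psi$ can be written explicitly in terms of $h, h', \theta, \psi$. I would carry out this short computation to verify that replacing $(\theta,\psi)$ by $(\theta+\pi, \psi+\pi)$ preserves the two angles the chord makes with the curve — this is where the identity $h(\theta)+h(\theta+\pi)=w$ enters decisively — and then the rest follows from the uniqueness of the Gutkin-$\delta$ partner as above. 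A secondary, easier point to address is why the relevant Gutkin partner of $\bar a$ is unique on the given side; this follows because $c$ and $\bar b$ lie on the same arc determined by $[\bar a,\bar a]$'s tangent and the chord length/angle data coincide, so the two chords coincide.
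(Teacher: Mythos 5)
Your reduction to the claim that the chord $[\bar a,\bar b]$ makes angle $\delta$ with $\gamma$ at $\bar a$ is where the argument breaks down. You propose to prove this claim by a support-function computation using only the constant-width identity $h(\theta)+h(\theta+\pi)=w$, i.e.\ you assert that for \emph{any} constant-width curve the antipodal substitution $(\theta,\psi)\mapsto(\theta+\pi,\psi+\pi)$ preserves the two angles a chord makes with the curve. This is false. Writing $P(\theta)=h(\theta)u(\theta)+h'(\theta)u'(\theta)$ with $u(\theta)=(\cos\theta,\sin\theta)$, constant width gives $P(\theta+\pi)=P(\theta)-w\,u(\theta)$, so the antipodal chord is $P(\psi+\pi)-P(\theta+\pi)=\bigl(P(\psi)-P(\theta)\bigr)-w\bigl(u(\psi)-u(\theta)\bigr)$, which is in general not parallel to $P(\psi)-P(\theta)$. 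Concretely, take $h(\theta)=1+\epsilon\cos 3\theta$ (a smooth constant-width curve for small $\epsilon>0$) and $\theta=0$, $\psi=\pi/2$: the chord $P(\psi)-P(\theta)=(-1-4\epsilon,\,1)$ makes angle $\arccos\bigl(1/\sqrt{(1+4\epsilon)^2+1}\bigr)$ with the tangent at $P(0)$, while the antipodal chord equals $(1-4\epsilon,\,-1)$ and makes the different angle $\arccos\bigl(1/\sqrt{(1-4\epsilon)^2+1}\bigr)$ with the tangent at $P(\pi)$. So constant width alone cannot yield your key claim; that claim is essentially equivalent to the lemma being proved and must use the Gutkin property of $\gamma$ in an essential way. (Your uniqueness point is fine: from a fixed point, on a fixed side, there is exactly one chord leaving at angle $\delta$, so once the key claim is available, $c=\bar b$ does follow.)

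The paper's proof runs in the opposite direction and is short: for a convex curve, a chord making angle $\delta$ with the tangent at both endpoints cuts off an arc along which the tangent direction turns by exactly $2\delta$ (the chord direction is the tangent direction at the near end plus $\delta$, and the tangent at the far end is the chord direction plus $\delta$). Applying this to $[a,b]$ and to $[\bar a,c]$, and using that the tangents at the antipodal points $a,\bar a$ are anti-parallel, one concludes that the tangents, hence the normals, at $b$ and $c$ are parallel; by the double normal property of constant-width curves (equivalently, by strict convexity there is a unique point with a given outer normal direction), $c=\bar b$. If you want to keep your scheme, replace the support-function step by this turning-angle argument: it shows directly that $c$ lies on the normal line of $\bar b$, which is what is needed, instead of trying to establish first that $[\bar a,\bar b]$ is a $\delta$-chord.
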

\begin{proof}Passing from $a$ to $b$ along $\gamma$ the tangent vector to $\gamma$ turns on the angle $2\delta$. 
	Analogously passing from $\bar a$ to $c$  the  tangent vector to $\gamma$ turns on $2\delta$. Together with the fact that $a, \bar a$ are antipodal we conclude that the tangent vectors to $\gamma$ at $b$ and at $c$ are parallel. Hence also the normals at these points. Hence it follows from double normal property of $\gamma$  (see \cite{BF}) that $c$ coincides with antipodal point $\bar b$.
\end{proof}
\begin{lemma}\label {l1}Let $\gamma$ be a convex curve of constant width $2R$ in the plane satisfying Gutkin property. Let $a,\bar a$
be a pair of antipodal points on $\gamma$. Then 
\begin{equation}\label{odin}
\rho(a)+\rho(\bar a)=2R;
\end{equation}
\begin{equation}\label{dva}
 \quad l+\bar l=4R\sin\delta.
\end{equation}
\end{lemma}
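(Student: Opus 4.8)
The plan is to parametrise $\gamma$ by the angle $\psi$ of its oriented unit tangent and to reduce both identities to the classical relation for the radii of curvature at antipodal points of a curve of constant width. Recall (see \cite{BF}) that for a smooth plane convex curve with support function $h$ one has $\rho=h+h''$ for the radius of curvature, $ds=\rho\,d\psi$ for the arclength element, and that constant width $2R$ is equivalent to $h(\psi)+h(\psi+\pi)=2R$. Differentiating the last relation twice gives $h''(\psi)+h''(\psi+\pi)=0$; adding this to the width relation and using $\rho=h+h''$ yields
\[
\rho(\psi)+\rho(\psi+\pi)=2R.
\]
Since $\bar a$ is the point for which $[a,\bar a]$ is a double normal, $a$ and $\bar a$ carry tangent directions differing by $\pi$, so (\ref{odin}) is precisely the displayed identity read at those two values of $\psi$.

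For (\ref{dva}) I track the tangent angle along the two Gutkin chords. Let $[a,b]$ be the Gutkin chord issuing from $a$, normalised so that the tangent angle at $a$ is $\psi_0$. As recalled in the proof of Lemma \ref{l0}, moving from $a$ to $b$ along $\gamma$ the oriented tangent turns by $2\delta$, so along this arc the tangent angle sweeps $[\psi_0,\psi_0+2\delta]$; in particular $b$ has tangent angle $\psi_0+2\delta$, and the chord $[a,b]$, which forms angle $\delta$ with the tangent at each of its endpoints, points in the bisecting direction $\psi_0+\delta$. Writing $b-a=\int_a^b\dot\gamma\,ds$, taking the component along the unit vector in direction $\psi_0+\delta$, and substituting $ds=\rho\,d\psi$, I get
\[
l=\int_{\psi_0}^{\psi_0+2\delta}\cos\big(\psi-\psi_0-\delta\big)\,\rho(\psi)\,d\psi=\int_{-\delta}^{\delta}\cos\phi\;\rho(\psi_0+\delta+\phi)\,d\phi.
\]
By Lemma \ref{l0} the Gutkin chord from $\bar a$ ends at $c=\bar b$; as $b$ has tangent angle $\psi_0+2\delta$, its antipode $c$ has tangent angle $\psi_0+2\delta+\pi$, while $\bar a$ has tangent angle $\psi_0+\pi$, so the same computation gives $\bar l=\int_{-\delta}^{\delta}\cos\phi\;\rho(\psi_0+\pi+\delta+\phi)\,d\phi$. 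Adding the two and invoking $\rho(\psi)+\rho(\psi+\pi)=2R$,
\[
l+\bar l=2R\int_{-\delta}^{\delta}\cos\phi\,d\phi=4R\sin\delta,
\]
which is (\ref{dva}).

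The parts requiring care are purely the conventions: fixing once and for all the sense of the oriented tangent, the side on which each chord realises the angle $\delta$, and checking that $b-a$ has no component transverse to the bisecting direction $\psi_0+\delta$ — equivalently, that $\psi_0+\delta$ is the common direction of $b-a$ forced by the two angle conditions. Once these are pinned down the argument closes; as a sanity check, on the circle of radius $R$ both integrals give $l=\bar l=2R\sin\delta$, in agreement with (\ref{dva}).
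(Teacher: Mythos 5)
Your proof is correct and follows essentially the same route as the paper: both arguments integrate $\rho(\psi)\,d\psi$ against a cosine over the arc of tangent-turning $2\delta$ cut off by the Gutkin chord at $a$ and at $\bar a$, add the two, and use $\rho(\psi)+\rho(\psi+\pi)=2R$. The only cosmetic differences are that you project $b-a$ onto the chord direction (getting $l$ directly) where the paper projects onto the tangent at $a$ and divides by $\cos\delta$, and that you spell out the support-function proof of (\ref{odin}), which the paper treats as standard.
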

\begin{proof}First of the two equalities is obviously true for any convex curve of constant width. In order to prove the second  
	we choose the coordinate system centered at $a$ with $x$-axis tangent to $\gamma$ at $a$ and $y$-axis along the double normal $a\bar a$. We compute:
	$$
	l(\delta)=\frac{x(2\delta)}{\cos\delta}=\frac{1}{\cos \delta}\int_{0}^{2\delta}\cos\phi\ \rho(\phi)d\phi,
	$$
	where $\rho(\phi)$ is curvature radius as function of the angle $\phi $ between tangent vector to $\gamma$ and the $x$-axis.
	Similarly we have:
	$$
		\bar l(\delta)=\frac{-x(\pi+2\delta)}{\cos\delta}=-\frac{1}{\cos \delta}\int_{\pi}^{\pi+2\delta}\cos\phi\ \rho(\phi)d\phi=
	$$
	$$
	=\frac{1}{\cos \delta}\int_{0}^{2\delta}\cos\phi\ \rho(\pi+\phi)d\phi.
	$$
	Adding up these two formulas, we get:
	
$$
l+\bar l=\frac{1}{\cos \delta}\int_{0}^{2\delta}\cos\phi(\rho(\phi)+\rho(\bar\phi)d\phi=\frac{1}{\cos \delta}\int_{0}^{2\delta}\cos\phi\cdot 2R\ d\phi=4R\sin\delta.
$$
	\end{proof}
\begin{figure}[h]
	\centering
	\includegraphics[width=0.6\linewidth]{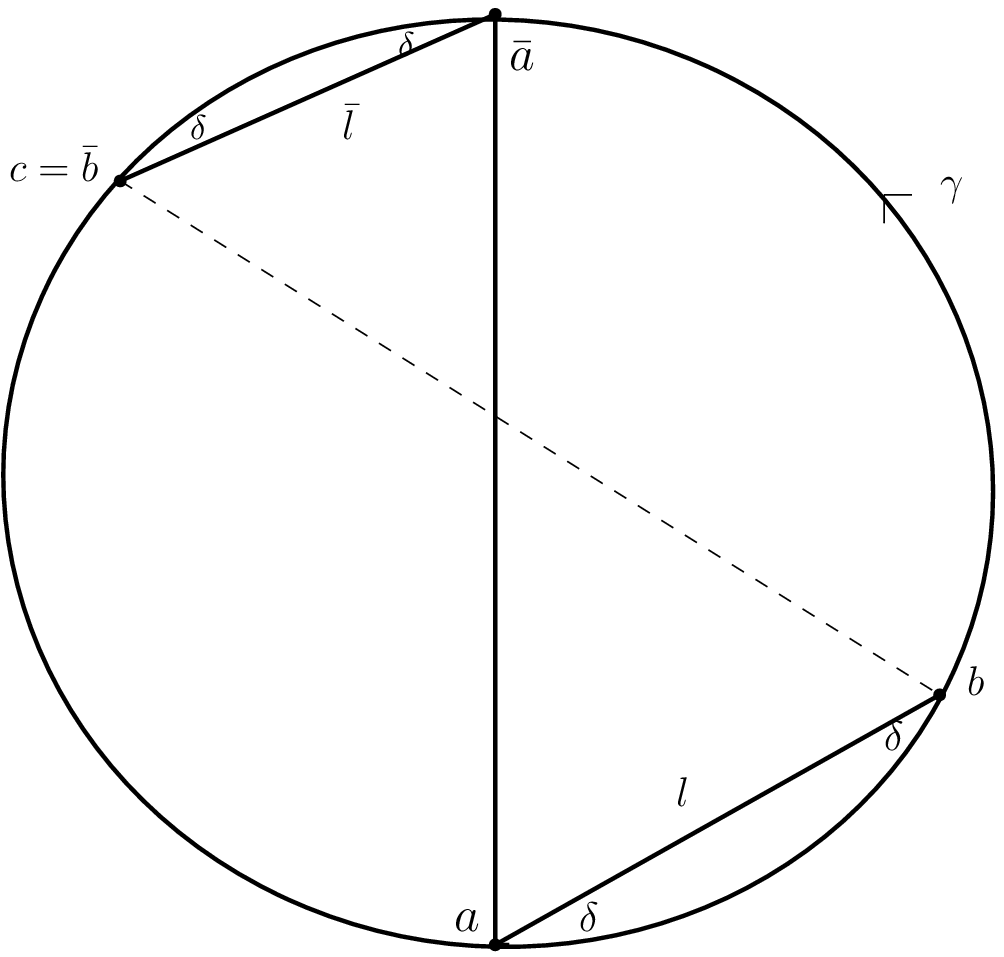}
	\caption{}
	\label{F1}
\end{figure}
\begin{lemma}\label{l2}
Let $\gamma$ be a convex curve of constant width $2R$ in the plane satisfying Gutkin property. It then follows that for any point of $\gamma$ the following inequality is true:
\begin{equation}\label{tri}
kl>\sin\delta.
\end{equation}
Moreover, for any pair of antipodal points $a,\bar a$ we have the following alternative:
either the inequality
\begin{equation}\label{chetyre}
kl<2\sin\delta.
\end{equation}
holds for exactly one of the points $a$ or $\bar a$ and the opposite inequality holds for the other one,
or the equality
$$
kl=2\sin\delta
$$
holds for both points $a$ and $\bar a$.

\end{lemma}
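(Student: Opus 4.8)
The plan is to exploit the two integral identities of Lemma \ref{l1} together with the double normal / planarity structure. First I would prove the inequality $kl > \sin\delta$ at an arbitrary point. By Theorem \ref{tau}(2), the quantity $(kl - \sin\delta)$ never vanishes together with $\tau$; but here $\gamma$ is a planar curve, so $\tau \equiv 0$, which forces $kl - \sin\delta \neq 0$ everywhere. Hence $kl - \sin\delta$ has a constant sign on the connected curve $\gamma$. To pin down the sign, I would evaluate at a convenient point, or integrate: using $l(\delta) = \frac{1}{\cos\delta}\int_0^{2\delta}\cos\phi\,\rho(\phi)\,d\phi$ from Lemma \ref{l1} and the relation $k = 1/\rho$, a mean-value / monotonicity argument (or the limiting behaviour as $\delta \to 0$, where $l \sim \rho \cdot 2\delta / \cos\delta \sim 2\rho\delta$ and $\sin\delta \sim \delta$, giving $kl \to 2 > 1$) shows the sign must be positive. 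So $kl > \sin\delta$ throughout, which is \eqref{tri}.

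For the alternative about a pair of antipodal points, I would use \eqref{dva}: $l + \bar l = 4R\sin\delta$, combined with \eqref{odin}: $\rho(a) + \rho(\bar a) = 2R$. Write $k = 1/\rho(a)$, $\bar k = 1/\rho(\bar a)$. The claim is that exactly one of $kl$, $\bar k\bar l$ is $< 2\sin\delta$ and the other $> 2\sin\delta$, unless both equal $2\sin\delta$. Equivalently, $(kl - 2\sin\delta)$ and $(\bar k\bar l - 2\sin\delta)$ have opposite signs (or both vanish). The key step is to show that the product $(kl - 2\sin\delta)(\bar k \bar l - 2\sin\delta) \leq 0$. I would try to establish a single clean identity or inequality, of the form
\[
(kl - 2\sin\delta) + (\bar k\bar l - 2\sin\delta) \quad\text{and}\quad kl \cdot \bar k\bar l
\]
constrained so that the two factors cannot have the same strict sign; concretely, using $l + \bar l = 4R\sin\delta$ one gets $kl + \bar k\bar l = \frac{l}{\rho(a)} + \frac{4R\sin\delta - l}{\rho(\bar a)}$, and with $\rho(\bar a) = 2R - \rho(a)$ this becomes a function of the two scalars $l$ and $\rho(a)$ which I would show is $\geq 4\sin\delta$ iff a complementary product inequality forces the factors apart. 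The cleanest route is probably: assume for contradiction that both $kl > 2\sin\delta$ and $\bar k\bar l > 2\sin\delta$ (the case both $<$ is symmetric); add the inequalities $l > 2R\sin\delta \cdot \frac{2}{k\cdot 2R}$... — more precisely, $kl > 2\sin\delta$ means $l > 2\rho(a)\sin\delta$, and $\bar k\bar l > 2\sin\delta$ means $\bar l > 2\rho(\bar a)\sin\delta$; adding, $l + \bar l > 2\sin\delta(\rho(a) + \rho(\bar a)) = 4R\sin\delta$, contradicting \eqref{dva}. The same addition with both $<$ gives $l + \bar l < 4R\sin\delta$, again contradicting \eqref{dva}. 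Hence the two factors cannot be simultaneously positive nor simultaneously negative, which yields precisely the stated alternative (with the boundary case $kl = \bar k\bar l = 2\sin\delta$ when neither strict inequality holds).

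The main obstacle I anticipate is not the antipodal alternative — that falls out immediately from \eqref{odin} and \eqref{dva} by the addition trick just described — but rather nailing down the \emph{sign} in \eqref{tri} rigorously: one must be sure that $kl - \sin\delta$, which is known to be non-vanishing and hence of constant sign, is actually positive rather than negative. The limiting argument as $\delta \to 0$ is heuristically convincing but, since $\delta$ is a fixed constant for the given curve, the honest proof should instead produce a point where $kl \geq \sin\delta$ can be read off directly — for instance by an averaging argument: $\int_0^{2\delta}\cos\phi\,d\phi = \sin 2\delta$, and by the mean value theorem $l\cos\delta = \rho(\phi_0)\sin 2\delta = 2\rho(\phi_0)\sin\delta\cos\delta$ for some $\phi_0 \in (0,2\delta)$, so $l = 2\rho(\phi_0)\sin\delta$, i.e. $\sin\delta = \frac{l}{2\rho(\phi_0)}$. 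Comparing with $kl = l/\rho$ at the relevant point requires relating $\rho$ at that point to $\rho(\phi_0)$; one then invokes constant-sign of $kl - \sin\delta$ to conclude the sign is $+$ globally. Carefully arranging this comparison is the delicate point of the argument.
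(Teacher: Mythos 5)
Your treatment of the antipodal alternative is correct and is exactly the paper's argument: from $kl>2\sin\delta$ at $a$, i.e.\ $l>2\rho\sin\delta$, relations \eqref{odin} and \eqref{dva} give $\bar l=4R\sin\delta-l<2\bar\rho\sin\delta$, and the same addition identity (the sum $(l-2\rho\sin\delta)+(\bar l-2\bar\rho\sin\delta)$ vanishes) disposes of all remaining sign patterns; that half needs no change. The genuine gap is where you yourself flag it: pinning down the sign of $kl-\sin\delta$. You correctly reduce \eqref{tri} to (a) non-vanishing of $kl-\sin\delta$ along $\gamma$ — the paper's primary justification is that the chord map $\gamma(s)\mapsto\gamma(s)+l(s)\bigl(\cos\delta\,\dot\gamma(s)+\sin\delta\,n(s)\bigr)$ is a diffeomorphism of $\gamma$, with Theorem \ref{tau}(2) explicitly offered as an alternative, so your route via $\tau\equiv0$ is acceptable — plus (b) one point where the sign can be read off. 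But (b) is left unresolved: the $\delta\to0$ limit is inapplicable since $\delta$ is a fixed constant of the curve (as you note), and your mean-value identity $l=2\rho(\phi_0)\sin\delta$ at an arbitrary point gives no way to compare $\rho(\phi_0)$ with the curvature radius at the base point.

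The missing idea — and the paper's actual argument — is to evaluate at a point $a$ where the curvature radius attains its minimum $\rho_{\min}$. There the osculating circle lies entirely inside $\gamma$ (a classical fact for convex curves), and the chord of that circle issuing from $a$ at angle $\delta$ with the tangent has length $2\rho_{\min}\sin\delta$; since $\gamma$ encloses the circle, the Gutkin chord is at least as long, so $l\geq 2\rho_{\min}\sin\delta$ and hence $kl\geq 2\sin\delta>\sin\delta$ at $a$. Alternatively, your own mean-value computation closes the gap once this special point is chosen: $l=2\rho(\phi_0)\sin\delta\geq 2\rho_{\min}\sin\delta$ because $\rho(\phi_0)\geq\rho_{\min}$ by minimality. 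Either way, the constancy of the sign of $kl-\sin\delta$ then propagates the inequality to every point, giving \eqref{tri}. So your plan is salvageable with a one-line addition (choose the minimizing point), but as written the proof of \eqref{tri} is incomplete at exactly the step you called delicate.
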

\begin{proof}
	 Inequality (\ref{tri}) follows from two facts. 
	
	The first fact is that, since $\gamma$
	satisfies Gutkin property, the mapping 
	$$
	\gamma(s)\rightarrow \gamma(s)+l(s)(\cos\delta\ \dot{\gamma}(s)+\sin\delta\  n(\gamma(s)))
	$$
is a diffeomorpphism and hence computing the derivative we get
$$
k(s)l(s)-\sin\delta\neq 0, \forall s.
$$	The same conclusion can be deduced from the statement 2. of Theorem \ref*{tau}.
Let $a$ be a point on the curve $\gamma$ of minimal curvature radius.
Then the osculating circle is contained entirely inside $\gamma$ and hence at this point (see Fig.\ref{F2})
$$l>|a-p|=2\rho_{min}\sin\delta>\rho_{min}\sin\delta.$$
But then $
kl>\sin\delta
$ at every point. 

\begin{figure}[h]
	\centering
	\includegraphics[width=0.6\linewidth]{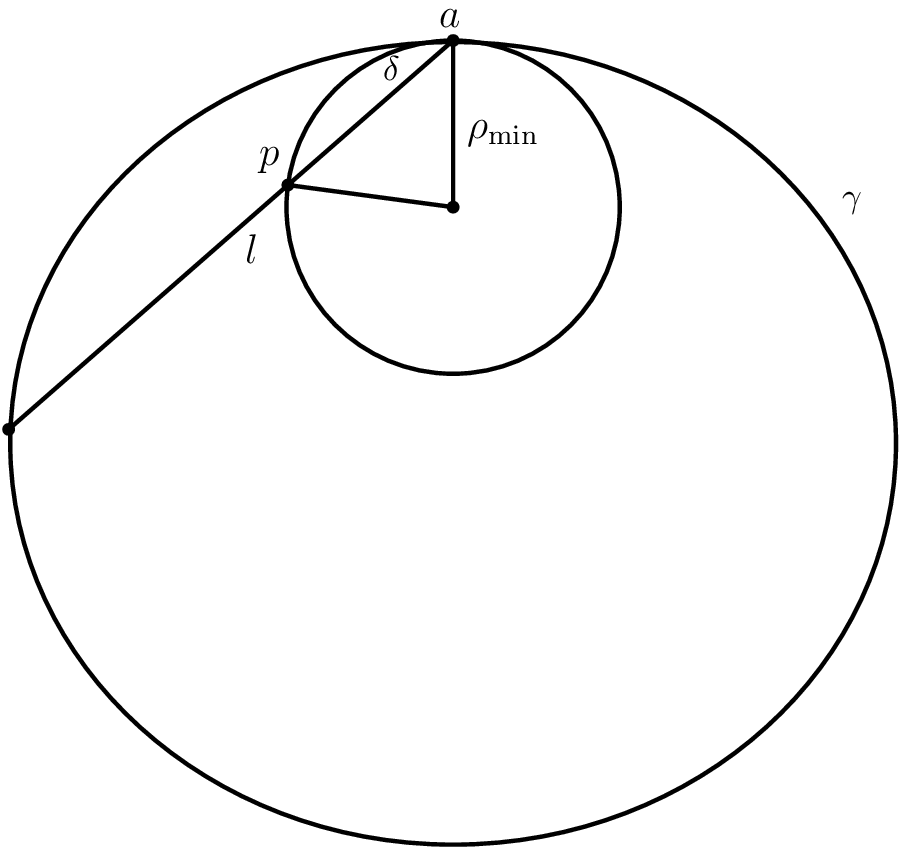}
	\caption{}
	\label{F2}
\end{figure}

In order to prove the second claim assume that for a point $a$
$$l>2\rho\sin\delta.$$
Then we have by Lemma \ref{l1}:
$$
\bar l=4R\sin\delta -l<4R\sin\delta-2\rho\sin\delta=2\bar\rho\sin\delta.
$$
This completes the proof of the Lemma.

	\end{proof}

Let $a\in S$ be any point. Chose any two orthogonal principle directions $v_1, v_2$ at $a$. Let us denote $\gamma_1,\gamma_2$ the corresponding geodesics lying in the 2-planes $\sigma_{1},\sigma_{2}$ which are of constant width and satisfy Gutkin property (Theorem \ref{width}).
We denote by $E$ the 3-space containing them. We shall denote by $v_i$, and $n_i, i=1,2$
tangent vectors and inner normals to $\gamma_i$. Since $\gamma_i$ are geodesics, $n_i$ are normals to the hypersurface $S$. We choose the arc-length parameters 
$t,s$ along $\gamma_1,\gamma_2$ respectively, so that
$$
\gamma_1(0)=\gamma_2(0)=a.
$$ We denote $k_i, i=1,2$ the curvature functions.

\begin{lemma}\label{l3}Let $S$ be a convex hypersurface in $\mathbf{R}^d$ satisfying Gutkin property. Let $\gamma_1,\gamma_2$ be the geodesics, as above, lying in orthogonal 2-planes $\sigma_1,\sigma_2$ (Fig.\ref{F3}). Then the curvature $k_2(a)$ satisfies the following quadratic equation: 
	\begin{equation}\label{l}
	Ax^2+Bx+C=0, 
	\end{equation} where $A,B$ and $C$ depend only on $\gamma_1$:
	$$
	A=l_1\sin \delta(k_1(b)l_1-\sin\delta),\quad B=2\sin\delta-k_1(b)l_1(1+\sin^2\delta),
	$$
	$$
	C=\frac{\sin\delta}{l_1}(k_1(b)l_1-2\sin\delta),
	$$ and $l_1$ is the chord of $\gamma_1$ starting at with the angle $\delta$ at $a$ and ending at $b$.
	\end{lemma}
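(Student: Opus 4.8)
The plan is to use the billiard dynamics on the hypersurface $S$ to relate the geometry near $a$ along $\gamma_1$ to that along $\gamma_2$. By Theorem~\ref{width}, both $\gamma_1$ and $\gamma_2$ are planar curves of constant width satisfying Gutkin property, lying in the orthogonal planes $\sigma_1,\sigma_2\subset E$. First I would consider the chord $[a,b]$ of $\gamma_1$ making angle $\delta$ at both endpoints, of length $l_1$, and write down the reflected ray: the billiard trajectory entering along $\overrightarrow{ab}$ reflects at $b$. The key observation is that the chord $[a,b]$ together with the data at $a$ of the \emph{second} geodesic $\gamma_2$ determines, via the Gutkin invariant hypersurface $\Sigma_\delta$ and the fact (Theorem~2.2) that $\mathcal B$ preserves characteristics, a configuration in which $\gamma_2$ near $a$ is constrained. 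Concretely, I would set up coordinates in $E\cong\mathbf R^3$ with origin at $a$, and use the Frenet data $v_1,n_1$ for $\gamma_1$ and $v_2,n_2$ for $\gamma_2$; since $\gamma_i$ are geodesics, $n_1(a)=n_2(a)=n$, the inner normal to $S$ at $a$, and $v_1(a)\perp v_2(a)$.

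The heart of the computation is to express the condition that the chord of $S$ issuing from $a$ in a direction that makes angle $\delta$ with $T_aS$ and lies in (or close to) the $v_2$--$n$ plane hits $S$ again at a point where the angle is again $\delta$ --- but the efficient route is to package everything into the second-order jet at $a$. I would expand, to second order in the arclength, the position of the endpoint $b$ along $\gamma_1$ and the position of the corresponding point along $\gamma_2$; the leading terms involve $l_1$, $\delta$, $k_1(a)$, $k_1(b)$ and $k_2(a)$, and the constant-width relations (Lemma~\ref{l1}) $\rho_1(a)+\rho_1(b)=2R$, $l_1+\bar l_1=4R\sin\delta$ eliminate $R$ and the ``antipodal'' data. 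After substituting these, the requirement that the two planar Gutkin curves fit together on a common convex hypersurface forces a single polynomial relation among $k_2(a)$ and the $\gamma_1$-data, which I expect to collapse to the quadratic
\begin{equation*}
l_1\sin\delta\,(k_1(b)l_1-\sin\delta)\,x^2+\bigl(2\sin\delta-k_1(b)l_1(1+\sin^2\delta)\bigr)x+\frac{\sin\delta}{l_1}\bigl(k_1(b)l_1-2\sin\delta\bigr)=0
\end{equation*}
with $x=k_2(a)$. I would verify the coefficients $A,B,C$ by checking the symmetric/consistency limits: when $\delta\to\pi/2$ this should reduce to the constant-width identity, and when $\gamma_1$ is a circle ($k_1\equiv 1/R$, $l_1=2R\sin\delta$) the quadratic should be satisfied by $x=1/R$, which gives two sanity checks pinning down $A,B,C$.

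The main obstacle is the geometric bookkeeping in Step~2: one must correctly identify \emph{which} chord of $S$ near $a$ realizes the coupling between $\gamma_1$ and $\gamma_2$ and argue that its second-order behavior is governed precisely by $k_2(a)$ and not by other curvatures of $S$ at $a$ (e.g. off-diagonal terms of the second fundamental form). Here I would use Theorem~\ref{planar}: because $v_1,v_2$ are principal directions, the geodesics stay principal and planar, so the relevant normal sections decouple and the mixed second fundamental form terms drop out, leaving only $k_1,k_2$ in play. Once that reduction is justified, the derivation of the quadratic is a finite Taylor computation combined with the two constant-width identities of Lemma~\ref{l1}, and the statement follows. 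A secondary check worth recording is that $A\neq 0$: this is exactly $k_1(b)l_1\neq\sin\delta$, which holds by Lemma~\ref{l2} inequality~\eqref{tri}, so \eqref{l} is a genuine quadratic and $k_2(a)$ is one of its two roots.
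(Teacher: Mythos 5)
Your overall strategy (perturb the chord $[a,b]$ and extract a second-order relation at $a$ coupling $k_2(a)$ to the $\gamma_1$-data) is in the right spirit, but the proposal stops exactly where the work lies, and the route you sketch would not produce the coefficients. The paper's proof hinges on a construction that you yourself flag as ``the main obstacle'' but never resolve: move the endpoint $a$ along $\gamma_2$ to $\gamma_2(s)$, and define the other endpoint as the intersection point $\gamma_1(t(s))$ of $\gamma_1$ with the cone of vertex $\gamma_2(s)$, axis $n_2(s)$ and half-angle $\pi/2-\delta$; transversality at $s=0$ makes $t(s)$ smooth. This produces a one-parameter family of chords that make angle $\delta$ at the $\gamma_2$-end by construction, hence, by the Gutkin property, angle $\delta$ at the $\gamma_1$-end as well, which gives the two scalar identities $\langle n_2(s),\gamma_1(t(s))-\gamma_2(s)\rangle=l(s)\sin\delta$ and $\langle n_1(t(s)),\gamma_1(t(s))-\gamma_2(s)\rangle=-l(s)\sin\delta$. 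Differentiating both twice at $s=0$ (the first derivative yields $t'(0)=0$ and $l'(0)=0$), using the Frenet equations of the planar geodesics so that $k_2(a)$ enters through $n_2'=-k_2v_2$ and $k_1(b)$ through $\dot n_1=-k_1\dot\gamma_1$, and then eliminating $t''(0)$ between the two resulting expressions is precisely what yields the quadratic with the stated $A,B,C$. Your appeal to Theorem \ref{planar} to ``decouple normal sections'' is not a substitute: it does not supply a family of chords with both endpoints on the two given geodesics, nor the two angle conditions whose second-order expansion carries the curvatures.

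Two further points. The constant-width identities of Lemma \ref{l1}, which you propose to use to ``eliminate $R$ and the antipodal data,'' play no role in this lemma: the statement is purely local at the configuration $(a,b)$, and indeed $R$ (and even $k_1(a)$) is absent from $A,B,C$; Lemma \ref{l1} is used elsewhere, in Lemma \ref{l2} and in Case 2 of the main proof. Also, the sanity checks you propose (the circle case and $\delta\to\pi/2$) can confirm the coefficients but cannot ``pin them down'': infinitely many quadratics pass such checks, so as written the coefficients are asserted rather than derived. Your closing remark that $A\neq0$ by inequality \eqref{tri} (equivalently by statement 2 of Theorem \ref{tau}) is correct, but it does not close the main gap.
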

\begin{figure}[h]
	\centering
	\includegraphics[width=0.55\linewidth]{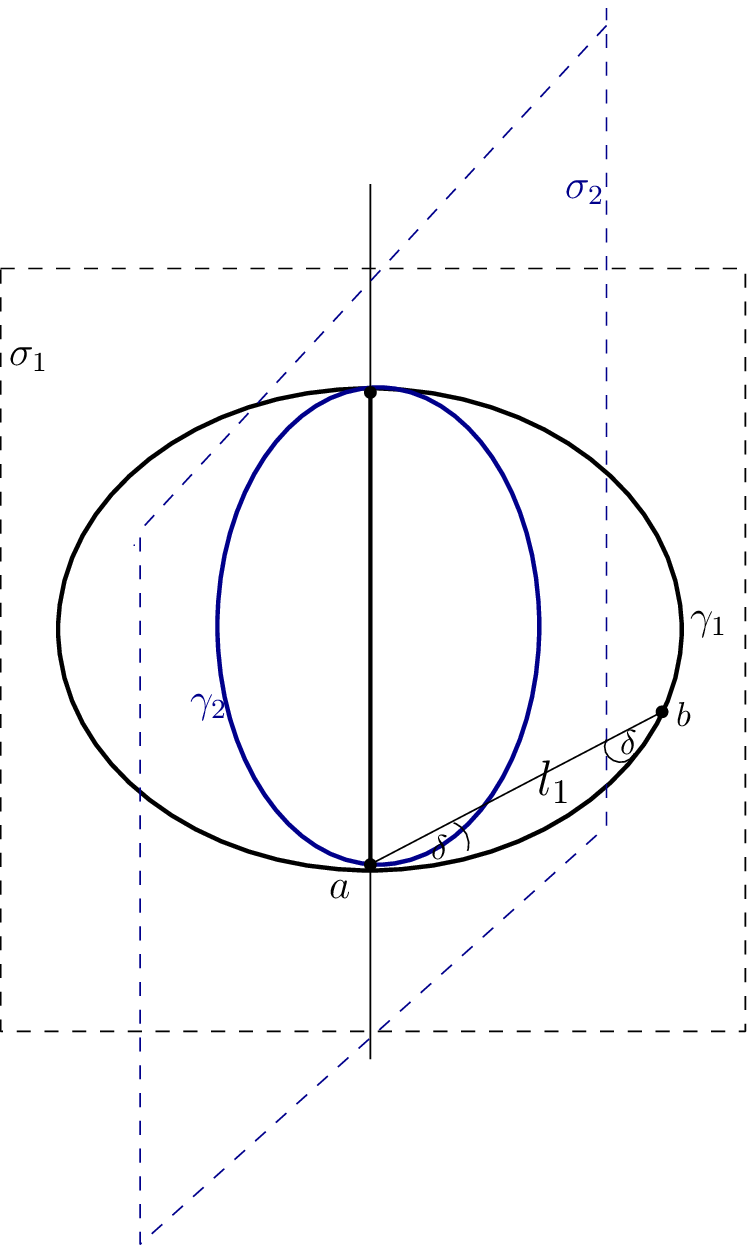}
	\caption{}
	\label{F3}
\end{figure}
\begin{proof}
	
The idea of the proof is as follows. For the initial moment $s=0$ we have the chord $[a,b]$ (Fig. \ref{F3}). We start moving the end $a$ of the chord along $\gamma_2$ to the point $\gamma_2(s)$ while the other end of the chord remains on $\gamma_1$. To describe this movement, let us consider the cone with the vertex at the point $\gamma_2(s)$ with the axis $n_2(s)$ and  the angle $(\pi/2-\delta)$ at the vertex. For the initial moment $s=0$ this cone intersects $\gamma_1$ transversally at the point $b$. Therefore also for small $s$ the cone intersects $\gamma_1$ at a point $\gamma_1(t(s))$, where $t(s)$ is a smooth function by transversality. By Gutkin property, the chord must have the same angle $(\pi/2-\delta)$ with the normal $n_1(t(s))$ also at the second end.
So we have two identities
\begin{equation}\label{I1}
\left< n_2(s), \gamma_1(t(s))-\gamma_2(s) \right>=
l(s)\sin\delta;
\end{equation}
\begin{equation}\label{I2}
\left< n_1(t(s)), \gamma_1(t(s))-\gamma_2(s) \right>=-
l(s)\sin\delta,
\end{equation}
where
$$
l(s):=|\gamma_1(t(s))-\gamma_2(s)|
$$
The next step is to differentiate these equalities twice with respect to $s$ at $s=0$. The second derivatives of both identities contain $t^{''}(0)$, equating the expressions for $t^{''}(0)$ from the first and the second gives the needed quadratic equation. 

In more details this step goes as follows. Let us note that in the computations below we consistently use the configuration for $s=0$ and Frenet formulas.

Differentiating once: 

\begin{equation}\label{e1}
\left<-k_2\gamma_2^{'}, \gamma_1-\gamma_2\right>+\left<n_2, \gamma_1\cdot t^{'}-\gamma_2^{'}\right>={\frac{\sin\delta}{l}}\left<\dot\gamma_1\cdot t^{'}-\gamma_2^{'},\gamma_1-\gamma_2\right>;
\end{equation}\begin{equation}\label{e2}
\left<-k_1\dot {\gamma_1}\cdot t^{'}, \gamma_1-\gamma_2\right>+\left<n_1, \gamma_1\cdot t^{'}-\gamma_2^{'}\right>=-{\frac{\sin\delta}{l}}\left<\dot\gamma_1\cdot t^{'}-\gamma_2^{'},\gamma_1-\gamma_2\right>.
\end{equation}
We use dot and prime for differentiation with respect to $t$ and $s$ respectively.
At this stage note that for $s=0$ one gets from (\ref{e1})
$$
t^{'}(0)\sin 2\delta=t^{'}(0)\sin\delta\cos\delta,
$$
and so 
\begin{equation}\label{e3}
t^{'}(0)=0.
\end{equation}
Also $$
l^{'}=\frac{1}{l}\left<\dot\gamma_1\cdot t^{'}-\gamma_2^{'},\gamma_1-\gamma_2\right>\quad \Rightarrow\  l^{'}(0)=0.
$$

Differentiating (\ref{e1}) and evaluating at $s=0$ one has after easy computations:
$$
-k_2^2(a)l_1\sin\delta+k_2(a)+t^{''}\sin 2\delta=\sin \delta\left(t^{''}\cos-k_2(a)\sin\delta+\frac{1}{l_1}\right).
$$
This gives 
\begin{equation}\label{e4}
t^{''}=-\frac{k_2(a)(1+\sin^2\delta)-\frac{\sin\delta}{l_1}-k_2^2(a)l_1\sin\delta}{\sin\delta\cos\delta}.
\end{equation}
Analogously differentiating (\ref{e2}) and evaluating at $s=0$ we get:
\begin{equation}\label{e5}
t^{''}=\frac{k_2(a)\cos^2\delta-\frac{\sin\delta}{l_1}}{\sin\delta\cos\delta-k_1(b)l_1\cos\delta}.
\end{equation}
From (\ref*{e4}), (\ref*{e5}) collecting terms  we get precisely equation (\ref {l}), proving the Lemma.
	\end{proof}

\section{\bf Proof of Theorem \ref{main}}
	Since the case of $d=3$ was considered in \cite{B0}, we shall assume here that $d>3$.
	Let $a$ be an arbitrary point of the hypersurface $S$. To prove Theorem \ref{main} it is sufficient to prove that every $a$ is 
	totally umbilic point, i.e.  all principle curvatures at $a$ are equal. 
	Choose an orthonormal basis $\{v_1, v_2.,..,v_{d-1}\}$ in $T_aS$ consisting of principle directions.
	Let us consider the geodesic curves $\gamma_1,\gamma_2,..,\gamma_{d-1}$ in the directions $\{v_1, v_2.,..,v_{d-1}\}$. These are plane curves intersecting in the antipodal point $\bar a$ (Theorem \ref*{width}).
	Let $k_1, k_2,...,k_{d-1}$ be the curvature functions of $\gamma_1,\gamma_2,..,\gamma_{d-1}$.
In order to prove that $a$ is totally umbilic we prove below the following claim:  $d-2$ principle curvatures $k_2,...,k_{d-1}$ are all equal. Choosing  $\gamma_{d-1}$ instead of $\gamma_1$ and applying the claim we get 
$$
k_1(a)=...=k_{d-2}(a).
$$
Thus using the fact that $d\geq3$ we conclude that $a$ is a totally umbilic point of $S$.

We turn now to the proof of the claim.
We shall consider two cases:
\vskip3mm

\underline{Case 1}. Suppose that at the point $b$ (see Fig. \ref{F3}) the inequality $$k_1(b)l_1-2\sin\delta\leq0$$ is valid.
In this case the coefficients of the quadratic equation (\ref{l}) satisfy by Lemma \ref{l2} $$A>0, C\leq 0.$$
By Lemma \ref{l3} we have in this case that for $k_2(a)$ there is only one possible value, namely the only positive root $r$ of equation (\ref{l}). Note that the coefficients $A,B,C$ and hence the positive root $r$ of equation (\ref{l}) depend only on $\gamma_1$. Therefore, replacing $\gamma_2$ by any of the geodesics $\gamma_i, i=3,..,d-1$  we get that all curvatures $$k_2(a)=...=k_{d-1}(a)=r$$ are equal to the positive root of equation (\ref{l}). 
\vskip3mm

 \underline{Case 2}. Suppose now that $$k_1(b)l_1-2\sin\delta>0.$$ In this case the previous argument does not work for the point $a$ because in this case equation ($\ref{l}$) has two positive roots.
However, we can apply the previous reasoning for the antipodal points $\bar a,\bar b$. Indeed, it follows from Lemma \ref{l2} that in 
Case 2
$$
k_1(\bar b)\bar l_1-2\sin\delta< 0,
$$
and so, according to the proof given for the Case 1, we have for $\bar a$:
$$
k_1(\bar a)=...=k_{d-2}(\bar a).
$$
But then  the equalities 
$$k_2(a)=...=k_{d-1}(a)$$
  hold true 
  also for point $a$,
  by the relation (\ref{odin}) of principle curvature radii at the antipodal points.
This completes the proof of Theorem \ref{main} for $d> 3$.

\section*{Acknowledgments}
This research was supported in part by ISF grant 162/15.
It is a pleasure to thank Yurii Dmitrievich Burago for useful consultations.


\end{document}